\newtheoremstyle{definition}
  {15pt}  
  {15pt}  
  {\rm}   
  {\parindent}      
  {\bf}   
  {. }    
  { }     
  {}      
\theoremstyle{definition}
\newtheorem{definition}{Definition}[section]
\newtheorem{remark}{Remark}[section]
\newtheoremstyle{theorem}
  {15pt}  
  {15pt}  
  {\sl}   
  {\parindent}      
  {\bf}   
  {. }    
  { }     
  {}      
\theoremstyle{theorem}
\newtheorem{theorem}{Theorem}[section]
\newtheorem{lemma}{Lemma}[section]
\newtheorem{corollary}{Corollary}[section]
\begin{document}
\title[Moving Least-Squares Approximation]{Matrices Associated with Moving Least-Squares Approximation and Corresponding Inequalities}
\author[S. Nenov]{Svetoslav Nenov}
\email[Corresponding author]{nenov@uctm.edu}
\author[T. Tsvetkov]{Tsvetelin Tsvetkov}
\email{tstsvetkov@uctm.edu}
\address{Department of Mathematics, University of Chemical Technology and Metallurgy, Sofia 1756, Bulgaria}

\keywords{moving least-squares approximation, singular-values}
\subjclass[2010]{93E24}
\begin{abstract}
In this article, some properties of matrices of moving least-squares approximation have been proven.
The used technique is based on singular-value decomposition and inequalities for singular-values.
Some inequalities for the norm of coefficients-vector of the linear approximation have been proven.
\end{abstract}

\maketitle

\section{Statement}

Let us remind the definition of moving least-squares approximation and a basic result.

Let:
\begin{enumerate}
\item $\mathcal D$ be a bounded domain in $\mathbb R^d$.
\item $\boldsymbol x_i\in \mathcal D$, $i=1,\dots,m$; $\boldsymbol x_i\not=\boldsymbol x_j$, if $i\not=j$.
\item $f: \mathcal D\to \mathbb R$ be a continuous function.
\item $p_i: \mathcal D\to \mathbb R$ be continuous functions, $i=1,\dots,l$. The functions $\{p_1,\dots,p_l\}$ are linearly independent in $\mathcal D$ and let $\mathcal P_l$ be their linear span.
\item $W:(0,\infty)\to(0,\infty)$ be a strong positive function.
\end{enumerate}

Usually the basis in $\mathcal P_l$ is constructed by monomials. For example: $p_l(\boldsymbol x)=x_1^{k_1}\dots x_d^{k_d}$, where $\boldsymbol x=(x_1,\dots,x_d)$, $k_1,\dots k_d\in\mathbb N$, $k_1+\dots+k_d\leq l-1$. In the case $d=1$, the standard basis is $\{1,x,\dots,x^{l-1}\}$.

Following \cite{AlexaBehrCohen-Or}, \cite{Levin1}, \cite{Levin2}, \cite{Levin3}, we will use the following definition. The {\it moving least-squares approximation} of order $l$ at a fixed point $\boldsymbol x$ is the value of $p^*(\boldsymbol x)$, where $p^*\in\mathcal P_l$ is minimizing the least-squares error
$$
\sum_{i=1}^{m}W(\|\boldsymbol x-\boldsymbol x_i\|)\left(p(\boldsymbol x)-f(\boldsymbol x_i)\right)^2
$$
among all $p\in\mathcal P_l$.

The approximation is ``local'' if weight function $W$ is fast decreasing as its argument tends to infinity and interpolation is achieved if $W(0)=\infty$. So, we define additional function $w:[0,\infty)\to[0,\infty)$, such taht:
$$w(r)=
\begin{cases}
\dfrac1{W(r)},&\quad \text{if ($r>0$) or ($r=0$ and $W(0)<\infty$)},\\
0,&\quad \text{if ($r=0$ and $W(0)=\infty$)}.
\end{cases}
$$
Some examples of $W(r)$ and $w(r)$, $r\geq 0$:
\begin{alignat*}{2}
&W(r)=e^{-\alpha^2r^2}&&\qquad \text{$\exp$-weight},\\
&W(r)=r^{-\alpha^2}&&\qquad \text{Shepard weights},\\
&w(\boldsymbol x, \boldsymbol x_i)=r^2e^{-\alpha^2r^2}&&\qquad \text{McLain weight},\\
&w(\boldsymbol x,\boldsymbol x_i)=e^{\alpha^2r^2}-1&&\qquad \text{see Levin's works}.
\end{alignat*}

Here and below: $\|\cdot\|=\|\cdot\|_2$ is 2-norm, $\|\cdot\|_1$ is 1-norm in $\mathbb R^d$; the superscript $^t$ denotes transpose of real matrix; $I$ is the identity matrix.

We introduce the  notations:
\begin{align*}
E=& \begin{pmatrix}
p_1(\boldsymbol x_1) & p_2(\boldsymbol x_1) & \cdots & p_l(\boldsymbol x_1)\\
p_1(\boldsymbol x_2) & p_2(\boldsymbol x_2) & \cdots & p_l(\boldsymbol x_2)\\
\vdots   & \vdots   &        & \vdots\\
p_1(\boldsymbol x_m) & p_2(\boldsymbol x_m) & \cdots & p_l(\boldsymbol x_m)\\
\end{pmatrix},\ \boldsymbol a = \begin{pmatrix}
a_1\\
a_2\\
\vdots\\
a_m
\end{pmatrix},\\
D= & 2 \begin{pmatrix}
w(\boldsymbol x,\boldsymbol x_1) & 0 & \cdots & 0\\
0 & w(\boldsymbol x,\boldsymbol x_2) & \cdots & 0\\
\vdots   & \vdots   &        & \vdots\\
0 & 0 & \cdots & w(\boldsymbol x,\boldsymbol x_m)\\
\end{pmatrix},\  \boldsymbol c = \begin{pmatrix}
p_1(\boldsymbol x)\\
p_2(\boldsymbol x)\\
\vdots\\
p_l(\boldsymbol x)
\end{pmatrix}.
\end{align*}

Through the article, we assume the following conditions (H1):
\begin{enumerate}
\item[(H1.1)] $1\in \mathcal P_l$.
\item[(H1.2)] $1\leq l \leq m$.
\item[(H1.3)] rank$(E^t)=l$.
\item[(H1.4)] $w$ is smooth function.
\end{enumerate}


\begin{theorem}[see \cite{Levin1}] Let the conditions (H1) hold true.

Then:
\begin{enumerate}
\item
The matrix $E^tD^{-1}E$
is non-singular.
\item The approximation defined by the moving least-squares method is
\begin{gather}
\hat L (f) = \sum_{i=1}^m a_i f(\boldsymbol x_i),
\end{gather}
where
\begin{gather}
\boldsymbol a = A_0 \boldsymbol c\quad\text{and}\quad   A_0= D^{-1}E\left(E^tD^{-1}E\right)^{-1}.
\end{gather}
\item If $w(\|\boldsymbol x_i-\boldsymbol x_i\|)=0$ for all $i=1,\dots,m$, then the approximation is interpolatory.
\end{enumerate}
\end{theorem}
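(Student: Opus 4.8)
The plan is to treat the three parts in order, the first two by the classical weighted-least-squares machinery and the third by a limiting argument. Throughout I write $p=\sum_{j=1}^l b_j p_j$ for a generic element of $\mathcal P_l$, so that evaluating at the nodes gives the vector $E\boldsymbol b$, and I abbreviate the moment matrix $B=E^tD^{-1}E$ and the data vector $\boldsymbol f=(f(\boldsymbol x_1),\dots,f(\boldsymbol x_m))^t$.

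For part (1), I would first note that, away from the interpolation situation, $D$ is diagonal with strictly positive entries $2w(\boldsymbol x,\boldsymbol x_i)$, so $D^{-1}$ is symmetric positive definite and factors as $D^{-1}=(D^{-1/2})^tD^{-1/2}$. Then $B=(D^{-1/2}E)^t(D^{-1/2}E)$ is a Gram matrix, and for every $\boldsymbol v\in\mathbb R^l$ one has $\boldsymbol v^tB\boldsymbol v=\|D^{-1/2}E\boldsymbol v\|^2\ge 0$, with equality only if $E\boldsymbol v=0$. By (H1.2)--(H1.3) the $m\times l$ matrix $E$ has full column rank $l$, so $E\boldsymbol v=0$ forces $\boldsymbol v=0$; hence $B$ is positive definite and in particular non-singular. (Equivalently, multiplication by the invertible $D^{-1/2}$ leaves the rank of $E$ unchanged.)

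For part (2), I would express the weighted error as $(E\boldsymbol b-\boldsymbol f)^t(2D^{-1})(E\boldsymbol b-\boldsymbol f)$, using that $D^{-1}=\tfrac12\,\mathrm{diag}(W(\|\boldsymbol x-\boldsymbol x_i\|))$. Since this is a positive-definite quadratic form in $\boldsymbol b$ by part (1), its unique minimiser solves the normal equations $B\boldsymbol b=E^tD^{-1}\boldsymbol f$, whence $\boldsymbol b=B^{-1}E^tD^{-1}\boldsymbol f$ and $p^*(\boldsymbol x)=\boldsymbol c^t\boldsymbol b=\boldsymbol c^tB^{-1}E^tD^{-1}\boldsymbol f$. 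Recognising that $B$ (hence $B^{-1}$) and $D^{-1}$ are symmetric, I would rewrite this as $(A_0\boldsymbol c)^t\boldsymbol f$ with $A_0=D^{-1}EB^{-1}$, i.e. $\hat L(f)=\boldsymbol a^t\boldsymbol f=\sum_{i=1}^m a_i f(\boldsymbol x_i)$ for $\boldsymbol a=A_0\boldsymbol c$, which is exactly the claimed formula.

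For part (3), the delicate point, the difficulty is that the condition $w(0)=0$ makes $D$ singular at $\boldsymbol x=\boldsymbol x_k$, so $A_0$ must be understood as a limit. The plan is to regard $\boldsymbol a(\boldsymbol x)=A_0(\boldsymbol x)\boldsymbol c(\boldsymbol x)$ and $p^*(\boldsymbol x)$ as functions of the evaluation point and let $\boldsymbol x\to\boldsymbol x_k$. Since $w$ is smooth (H1.4) and positive off the diagonal, every entry of $D^{-1}$ stays bounded except $(D^{-1})_{kk}=1/(2w(\boldsymbol x,\boldsymbol x_k))\to\infty$; writing $D^{-1}=\tfrac1{2w(\boldsymbol x,\boldsymbol x_k)}\boldsymbol e_k\boldsymbol e_k^t+O(1)$ exhibits a rank-one blow-up of $B$ in the direction $\boldsymbol r_k=E^t\boldsymbol e_k=(p_1(\boldsymbol x_k),\dots,p_l(\boldsymbol x_k))^t$. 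The main obstacle is controlling $B^{-1}$ through this singular limit, which I would handle by a Sherman--Morrison expansion, or more transparently by the variational observation that an unbounded weight on the $k$-th residual forces the constraint $p^*(\boldsymbol x_k)=f(\boldsymbol x_k)$, after which the remaining terms are minimised. Either route gives $a_i(\boldsymbol x)\to\delta_{ik}$ and $p^*(\boldsymbol x)\to f(\boldsymbol x_k)$ as $\boldsymbol x\to\boldsymbol x_k$, so that $\hat L(f)$ reproduces the data values at the nodes; continuity of $p^*$ in $\boldsymbol x$ (a rational function of the smooth weights, with non-vanishing denominator by part (1)) then yields the interpolation property. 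I expect this limiting step to be the crux, the first two parts being essentially the solution of a non-degenerate weighted least-squares problem.
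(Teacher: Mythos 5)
The paper gives no proof of this theorem --- it is imported from Levin's work \cite{Levin1} and stated without argument --- so there is nothing internal to compare you against; I can only judge the proposal on its own terms. Parts (1) and (2) are correct and are the standard weighted-least-squares derivation: factoring $D^{-1}=(D^{-1/2})^tD^{-1/2}$ exhibits $B=E^tD^{-1}E$ as a Gram matrix, full column rank of $E$ from (H1.2)--(H1.3) upgrades semi-definiteness to definiteness, and the normal equations $B\boldsymbol b=E^tD^{-1}\boldsymbol f$ together with the symmetry of $B$ and $D$ give $\hat L(f)=\boldsymbol a^t\boldsymbol f$ with $\boldsymbol a=D^{-1}EB^{-1}\boldsymbol c$. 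One caveat you correctly flag but should state once and for all: the whole construction presupposes $\boldsymbol x\neq\boldsymbol x_i$ when $W(0)=\infty$, since otherwise $D$ is singular.

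Part (3) is the only place with a genuine gap: it is a plan, not a proof. The variational route is the right one, but two steps are missing. First, comparing the minimum of the functional with a fixed competitor $q\in\mathcal P_l$ satisfying $q(\boldsymbol x_k)=f(\boldsymbol x_k)$ (available because $1\in\mathcal P_l$) only yields $p^*_{\boldsymbol x}(\boldsymbol x_k)\to f(\boldsymbol x_k)$; interpolation needs $p^*_{\boldsymbol x}(\boldsymbol x)\to f(\boldsymbol x_k)$, and the difference $p^*_{\boldsymbol x}(\boldsymbol x)-p^*_{\boldsymbol x}(\boldsymbol x_k)$ is controlled only if the coefficient vector $\boldsymbol b(\boldsymbol x)$ remains bounded as $\boldsymbol x\to\boldsymbol x_k$, which you do not establish. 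Second, the Sherman--Morrison alternative cannot be applied blindly: the ``bounded part'' $\sum_{i\neq k}\tfrac{1}{2w(\boldsymbol x,\boldsymbol x_i)}\boldsymbol r_i\boldsymbol r_i^t$ of the moment matrix, with $\boldsymbol r_i=(p_1(\boldsymbol x_i),\dots,p_l(\boldsymbol x_i))^t$, need not be invertible (take $m=l$), so the rank-one update formula does not directly give the limit of $B^{-1}$. Both issues are repairable by the argument of Lancaster--Salkauskas/Levin, but as written the interpolation claim is asserted rather than proved.
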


For the approximation order of moving least-squares approximation (see \cite{Levin1} and \cite{Fasshauer}) it is not difficult to receive (for convenience we suppose $d=1$ and standard polynomial basis, see \cite{Fasshauer}):
\begin{align}
\left|f(x)-\hat L (f)(x)\right|
\leq \|f(x)-p^*(x)\|_{\infty}\left[1+\sum\limits_{i=1}^{m} |a_i|\right],\label{eq:intr:11}
\end{align}
and moreover ($C$=const.)
\begin{gather}\label{eq:intr:12}
\|f(x)-p^*(x)\|_{\infty} \leq C h^{l+1} \max\left\{\left|f^{(l+1)}(x)\right|:x\in\overline{\mathcal D}\right\}.
\end{gather}
It follows from \eqref{eq:intr:11} and \eqref{eq:intr:12} that the error of moving least-squares approximation is upper-bounded from the 2-norm of coefficients of approximation ($\|\boldsymbol a\|_1\leq \sqrt{m}\|\boldsymbol a\|_2$). That is why, the goal in this short note, is to discuss a method for majorization in the form
$$
\|\boldsymbol a\|_2 \leq M \exp\left(N\|\boldsymbol x - \boldsymbol x_i\|\right),
$$
Here the constants $M$ and $N$ depends on singular values of matrix $E^t$, and numbers $m$ and $l$ (see Section 3). In Section 2 some properties of matrices associated with approximation (symmetry, positive semi-definiteness, and norm majorization by $\sigma_{min}(E^t)$ and $\sigma_{max}(E^t)$) are proven. 

The main result in Section 3 is formulated in the case of $\exp$-moving least-squares approximation, but it is not hard to receive analogous results in the different cases: Backus-Gilbert wight functions, McLain wight functions, etc.

\section{Some Auxiliary Lemmas}\label{sec:second}

\begin{definition}
We will call the matrices
$$
A_1 = A_0E^t=D^{-1}E\left(E^tD^{-1}E\right)^{-1}E^t\quad \text{and}\quad A_2=A_1-I
$$
{\it $A_1$-matrix} and {\it $A_2$-matrix} of the approximation $\hat L$, respectively.
\end{definition}

\begin{lemma}\label{lem:2.1} Let the conditions (H1) hold true.

Then, the matrices $A_1D^{-1}$ and $A_2D^{-1}$ are symmetric.
\end{lemma}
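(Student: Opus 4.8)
The plan is to verify symmetry directly, by transposing each matrix and exploiting the symmetry of the diagonal weight matrix $D^{-1}$ together with the self-adjointness of the Gram-type matrix $E^tD^{-1}E$. Two elementary facts drive everything. Since $D$ is diagonal with nonzero entries, it is symmetric and invertible, so $(D^{-1})^t=D^{-1}$. Writing $G:=E^tD^{-1}E$, one has $G^t=E^t(D^{-1})^t(E^t)^t=E^tD^{-1}E=G$, so $G$ is symmetric; hence its inverse $G^{-1}$ (which exists by part (1) of the Theorem) satisfies $(G^{-1})^t=(G^t)^{-1}=G^{-1}$ and is symmetric as well.

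For the first claim I would write $A_1D^{-1}$ as the five-fold product $A_1D^{-1}=D^{-1}E\,G^{-1}\,E^tD^{-1}$ and transpose it, reversing the order of the factors:
$$
(A_1D^{-1})^t=(D^{-1})^t\,E\,(G^{-1})^t\,E^t\,(D^{-1})^t=D^{-1}E\,G^{-1}\,E^tD^{-1}=A_1D^{-1},
$$
where the middle equality invokes the two facts above. This establishes the symmetry of $A_1D^{-1}$.

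For $A_2D^{-1}$, since $A_2=A_1-I$ I would simply expand $A_2D^{-1}=A_1D^{-1}-D^{-1}$ and transpose term by term: $(A_2D^{-1})^t=(A_1D^{-1})^t-(D^{-1})^t=A_1D^{-1}-D^{-1}=A_2D^{-1}$, using the symmetry of $A_1D^{-1}$ just established and of $D^{-1}$. There is no genuine obstacle here beyond careful bookkeeping of the transpose order in the product; the only substantive ingredient is recognizing that $E^tD^{-1}E$ is self-adjoint, which rests solely on the symmetry of the diagonal matrix $D^{-1}$.
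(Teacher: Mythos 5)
Your proof is correct and is exactly the argument the paper intends: the paper's proof of Lemma \ref{lem:2.1} consists of the single line ``direct calculation of the corresponding transpose matrices,'' and your transposition of the five-fold product $D^{-1}EG^{-1}E^tD^{-1}$ using the symmetry of $D^{-1}$ and of $G=E^tD^{-1}E$ is that calculation written out in full. No gaps; the treatment of $A_2D^{-1}=A_1D^{-1}-D^{-1}$ is likewise the obvious and intended route.
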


\begin{proof}
Direct calculation of the corresponding transpose matrices.
\end{proof}
\begin{lemma}\label{lem:2.2} Let the conditions (H1) hold true.

Then:
\begin{enumerate}
\item All eigenvalues of $A_1$ are 1 and 0 with geometric multiplicity $l$ and $m-l$, respectively.
\item All eigenvalues of $A_2$ are 0 and -1 with geometric multiplicity $l$ and $m-l$, respectively.
\end{enumerate}
\end{lemma}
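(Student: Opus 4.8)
The plan is to observe that $A_1$ is an \emph{idempotent} matrix and then invoke the standard spectral theory of projections. First I would check by direct substitution that $A_1^2=A_1$: writing
$$
A_1^2=D^{-1}E\left(E^tD^{-1}E\right)^{-1}E^t\,D^{-1}E\left(E^tD^{-1}E\right)^{-1}E^t,
$$
the inner factor $E^tD^{-1}E$ cancels one copy of its inverse, leaving $A_1$ again. This step is legitimate precisely because $\left(E^tD^{-1}E\right)^{-1}$ exists, which is guaranteed by part (1) of the preceding theorem. Idempotency immediately confines the spectrum: if $A_1\boldsymbol v=\lambda\boldsymbol v$ with $\boldsymbol v\neq\boldsymbol 0$, then $\lambda\boldsymbol v=A_1\boldsymbol v=A_1^2\boldsymbol v=\lambda^2\boldsymbol v$, so $\lambda\in\{0,1\}$.

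Next I would note that $A_1$ is diagonalizable, so that its geometric and algebraic multiplicities coincide and the multiplicities named in the statement are unambiguous. This holds because the minimal polynomial of $A_1$ divides $t^2-t=t(t-1)$, a polynomial with simple roots; equivalently $\mathbb R^m=\ker A_1\oplus\operatorname{im}A_1$ for a projection. Consequently the multiplicity of the eigenvalue $1$ equals $\dim\operatorname{im}A_1=\operatorname{rank}A_1$ and the multiplicity of $0$ equals $m-\operatorname{rank}A_1$.

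To determine the rank I would compute the trace, recalling that for an idempotent matrix the trace equals the rank. Using the cyclic invariance of the trace,
$$
\operatorname{trace}A_1=\operatorname{trace}\left(E^tD^{-1}E\left(E^tD^{-1}E\right)^{-1}\right)=\operatorname{trace}I_l=l,
$$
where $I_l$ is the $l\times l$ identity. Hence $\operatorname{rank}A_1=l$, which yields eigenvalue $1$ with multiplicity $l$ and eigenvalue $0$ with multiplicity $m-l$, establishing part (1).

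Part (2) then follows from the shift $A_2=A_1-I$: any eigenvector $\boldsymbol v$ with $A_1\boldsymbol v=\lambda\boldsymbol v$ satisfies $A_2\boldsymbol v=(\lambda-1)\boldsymbol v$, and the eigenspaces are unchanged. Thus the eigenvalue $1$ of $A_1$ becomes $0$ for $A_2$ (multiplicity $l$) and the eigenvalue $0$ becomes $-1$ (multiplicity $m-l$). I do not expect a genuine obstacle; the only points deserving care are justifying diagonalizability, so that the geometric multiplicities are what is claimed, and checking the validity of the trace manipulation, both of which ultimately rest on the non-singularity of $E^tD^{-1}E$ supplied by the earlier theorem.
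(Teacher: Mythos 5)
Your proof is correct, but it follows a genuinely different route from the paper's. You establish that $A_1$ is idempotent, deduce that its spectrum lies in $\{0,1\}$ and that it is diagonalizable (so geometric and algebraic multiplicities agree), and then count the multiplicity of the eigenvalue $1$ via $\operatorname{rank}A_1=\operatorname{trace}A_1=\operatorname{trace}\bigl(E^tD^{-1}E\,(E^tD^{-1}E)^{-1}\bigr)=l$; the shift $A_2=A_1-I$ then handles part (2). The paper never writes down $A_1^2=A_1$; instead it works directly with kernels and images: from $E^tA_2=0$ it gets $\operatorname{im}(A_2)\subseteq\operatorname{null}(E^t)$ and hence $\dim\operatorname{null}(A_2)\geq l$, then shows the systems $A_1\boldsymbol\eta=0$ and $E^t\boldsymbol\eta=0$ are equivalent, so that $-1$ is an eigenvalue of $A_2$ with geometric multiplicity exactly $m-l$, and finally lets the dimension count force equality. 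Your argument is shorter and more self-contained --- the trace identity delivers the rank without ever invoking $\dim\operatorname{null}(E^t)=m-l$, and the minimal-polynomial remark settles diagonalizability cleanly --- while the paper's argument has the side benefit of explicitly identifying the eigenspaces (the $0$-eigenspace of $A_1$ is exactly $\operatorname{null}(E^t)$), which is the kind of structural information that recurs in Section 2. Both rest on the same hypothesis, namely the invertibility of $E^tD^{-1}E$ guaranteed by (H1.3).
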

\begin{proof} Part 1. We will prove that the dimension of the null-space\linebreak $\dim\left(\text{null}\,(A_2)\right)$ is at least $l$.

Using the definition of $A_2=D^{-1}E\left(E^tD^{-1}E\right)^{-1}E^t-I$, we receive
$$
E^tA_2=\left(E^tD^{-1}E\right)\left(E^tD^{-1}E\right)^{-1}E^t - E^t=0.
$$
Hence
$$
\text{im}\,(A_2)\subseteq\text{null}\,(E^t).
$$
Using (H1.3), $E^t$ is $(l\times m)$-matrix with maximal rank $l$ ($l<m$). Therefore $\dim(\text{null}\,(E^t))=m-l$. Moreover $\dim\left(\text{im}\,(A_2)\right)=m-\dim\left(\text{null}\,(A_2)\right)$. That is why $m-\dim\left(\text{null}\,(A_2)\right) \leq m-l$ or $l\leq \dim\left(\text{null}\,(A_2)\right)$.
\bigskip

Part 2. We will prove that $-1$ is eigenvalue of $A_2$ with geometric multiplicity  $m-l$, or the system
$$
A_2\boldsymbol\eta=-\boldsymbol\eta \ \Longleftrightarrow A_1\ \boldsymbol\eta = 0
$$
has $m-l$ linearly independent solutions. 

Obviously the systems
\begin{gather}\label{eq:help:2.1}
A_1\boldsymbol\eta=D^{-1}E\left(E^tD^{-1}E\right)^{-1}E^t \boldsymbol \eta = 0
\end{gather}
and
\begin{gather}\label{eq:help:2.2}
E^t \boldsymbol \eta = 0
\end{gather}
are equivalent. Indeed, if $\boldsymbol \eta_0$ is a solution of \eqref{eq:help:2.1}, then
\begin{align*}
D^{-1}E\left(E^tD^{-1}E\right)^{-1}E^t \boldsymbol \eta_0 = 0\ \Longrightarrow&\ E^tD^{-1}E\left(E^tD^{-1}E\right)^{-1}E^t \boldsymbol \eta_0 = 0\\
\Longrightarrow&\ E^t \boldsymbol \eta_0 = 0,
\end{align*}
i.e. $\boldsymbol \eta_0$ is solution of \eqref{eq:help:2.2}.

On the other hand, if $\boldsymbol \eta_0$ is a solution of \eqref{eq:help:2.2}, then
\begin{align*}
\left(D^{-1}E\left(E^tD^{-1}E\right)^{-1}E^t\right) \boldsymbol \eta_0=
\left(D^{-1}E\left(E^tD^{-1}E\right)^{-1}\right)\left(E^t \boldsymbol \eta_0\right)=0,
\end{align*}
i.e. $\boldsymbol \eta_0$ is solution of \eqref{eq:help:2.1}. Therefore
$$
\dim\left(\text{im}\,\left(A_1\right)\right)=\dim\left(\text{im}\,\left(E^t\right)\right)=m-l.
$$

Part 3. It follows from parts 1 and 2 of the proof that $0$ is an eigenvalue of $A_2$ with multiplicity exactly $l$ and $-1$ is an eigenvalue of $A_2$ with multiplicity exactly $m-l$.

It remains to prove that 1 is eigenvalue of $A_1$ with multiplicity at least $l$, but this is analogous to the proven part 1 or it follows dirctly from the definition of $A_1=A_2+I$.
\end{proof}

The following two results are proven in \cite{LuPearce}.

\begin{theorem}[see \cite{LuPearce}, Theorem 2.2]\label{LuPearce} Suppose $U$, $V$ are $(m\times m)$ Hermitian matrices and either $U$ or $V$ is positive semi-definite. Let
$$
\lambda_1(U)\geq\cdots\geq\lambda_m(U),\quad
\lambda_1(V)\geq\cdots\geq\lambda_m(V)
$$
denote the eigenvalues of $U$ and $V$, respectively.

Let:
\begin{enumerate}
\item $\pi(U)$ is the number of positive eigenvalues of $U$; 
\item $\nu(U)$ is the nubver of negative eigenvalues of $U$;
\item $\xi(U)$ is the number of zero eigenvalues of $U$.
\end{enumerate}

Then:
\begin{enumerate}
\item If $1\leq k \leq \pi(U)$, then
\begin{gather*}
\min\limits_{1\leq i\leq k}\left\{ \lambda_i(U)\lambda_{k+1-i}(V) \right\}\geq \lambda_k(VU)\geq
\max\limits_{k\leq i\leq m}\left\{ \lambda_i(U)\lambda_{m+k-i}(V) \right\}.
\end{gather*}
\item If $\pi(U)< k\leq m-\nu(U)$, then
$$
\lambda_k(V U)=0.
$$
\item If $m-\nu(U)< k\leq m$, then
\begin{gather*}
\min\limits_{1\leq i\leq k}\left\{ \lambda_i(U)\lambda_{m+i-k}(V) \right\}\geq \lambda_k(V U)\geq
\max\limits_{k\leq i\leq m}\left\{ \lambda_i(U)\lambda_{i+1-k}(V) \right\}.
\end{gather*}
\end{enumerate}
\end{theorem}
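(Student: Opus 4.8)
The plan is to exploit the identity $\mathrm{spec}(VU)=\mathrm{spec}(UV)$, so that without loss of generality I may take $V$ to be the positive semi-definite factor (this is also consistent with the inertia of $U$ governing the index ranges in the statement). The crucial observation is then that $VU$ is spectrally equivalent, up to zeros, to a genuinely \emph{Hermitian} matrix. Writing $V=V^{1/2}V^{1/2}$ with $V^{1/2}\succeq0$ and applying $\mathrm{spec}(AB)\setminus\{0\}=\mathrm{spec}(BA)\setminus\{0\}$ with $A=V^{1/2}$, $B=V^{1/2}U$, one sees that $VU$ and the Hermitian matrix $H:=V^{1/2}UV^{1/2}$ share all eigenvalues apart from possible zeros. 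In particular every nonzero eigenvalue of $VU$ is real, which is what makes the ordering $\lambda_1(VU)\ge\cdots\ge\lambda_m(VU)$ meaningful in the first place.

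I would first settle Case 2, the vanishing of the middle eigenvalues. When $V$ is invertible, $H=V^{1/2}UV^{1/2}$ is a congruence transform of $U$, so by Sylvester's law of inertia $H$ has exactly $\pi(U)$ positive, $\nu(U)$ negative, and $\xi(U)=m-\pi(U)-\nu(U)$ zero eigenvalues. Arranged in decreasing order, this forces $\lambda_k(VU)=0$ precisely for $\pi(U)<k\le m-\nu(U)$. The singular case is then recovered by a continuity argument: replace $V$ by $V+\varepsilon I\succ0$, apply the invertible case, and let $\varepsilon\downarrow0$, using continuity of eigenvalues under perturbation.

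For the quantitative bounds of Cases 1 and 3, I would apply the Courant--Fischer min--max principle to $H$,
$$
\lambda_k(VU)=\lambda_k(H)=\max_{\dim S=k}\ \min_{\substack{x\in S\\ x\neq0}}\frac{\langle U\,V^{1/2}x,\,V^{1/2}x\rangle}{\|x\|^2},
$$
and choose the competing $k$-dimensional subspaces $S$ to be spanned by suitable eigenvectors of $U$ and of $V$. The key algebraic step is the factorization of the Rayleigh quotient: with $y=V^{1/2}x$ one has $\langle U\,V^{1/2}x,V^{1/2}x\rangle=\langle Uy,y\rangle$ and $\|y\|^2=\langle Vx,x\rangle$, so the quotient equals $\frac{\langle Uy,y\rangle}{\|y\|^2}\cdot\frac{\langle Vx,x\rangle}{\|x\|^2}$, a product of a Rayleigh quotient of $U$ by one of $V$. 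Dimension counting guarantees that a span of top eigenvectors of $V$ meets a chosen span of eigenvectors of $U$ nontrivially, and on such an intersection each factor is controlled by an eigenvalue of $U$ and an eigenvalue of $V$. Tracking which eigenvector blocks are combined reproduces exactly the index relations $i+(k+1-i)=k+1$ and $i+(m+k-i)=m+k$ of Case 1, and analogously $m+i-k$, $i+1-k$ of Case 3, while the sign of the participating eigenvalue of $U$ fixes the direction of each inequality.

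The main obstacle I anticipate is twofold. First, the indefiniteness of $U$ must be handled so that the product inequalities emerge with both the correct index shift \emph{and} the correct direction in each regime: the upper (\,$\min$\,) bound is obtained by restricting to directions where $U$ acts through its larger eigenvalues, the lower (\,$\max$\,) bound by working on a complementary subspace, and the two must be matched against the appropriate extremes of $V$. Second, the degenerate situations where $V^{1/2}$ (or $U$) is singular require care, since then $y=V^{1/2}x$ may vanish and the factored Rayleigh quotient degenerates to $0$; the perturbation argument $V\rightsquigarrow V+\varepsilon I$ resolves the zero-counting, but one must check that passing to the limit does not reshuffle or collapse the strict bounds of Cases 1 and 3.
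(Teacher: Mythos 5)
The paper does not prove this theorem at all: it is imported verbatim from Lu and Pearce (the sentence immediately preceding it reads ``The following two results are proven in \cite{LuPearce}''), so there is no in-paper argument to measure yours against. Judged on its own terms, your outline follows the route one would expect: reduce $VU$ to the Hermitian matrix $V^{1/2}UV^{1/2}$ (in fact $AB$ and $BA$ have the same characteristic polynomial for square $A,B$, so the two matrices share \emph{all} eigenvalues with multiplicity, not merely the nonzero ones), use Sylvester's law plus a perturbation of $V$ to $V+\varepsilon I$ for Case 2, and use Courant--Fischer for Cases 1 and 3. Case 2 as you describe it is essentially complete.

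Two genuine gaps remain. First, the opening ``without loss of generality $V$ is the positive semi-definite factor'' is not free: the case partition is indexed by the inertia of $U$, not of $V$, and swapping the factors via $\mathrm{spec}(VU)=\mathrm{spec}(UV)$ turns the hypotheses $1\leq k\leq\pi(U)$, etc., into conditions on $\pi(V),\nu(V)$; you must either check that the index ranges survive the swap or handle positive semi-definite $U$ separately with $H=U^{1/2}VU^{1/2}$. This matters here: in the one place this theorem is used in the paper (Lemma~\ref{cor:2.1}), it is $U=D$ that is positive definite while $V=A_1D^{-1}$ is merely Hermitian. Second, for Cases 1 and 3 you have a plan rather than a proof: the factorization of the Rayleigh quotient into a $U$-quotient times a $V$-quotient is valid only where $V^{1/2}x\neq 0$; multiplying the two one-sided estimates requires controlling the sign of the $U$-quotient on the chosen subspace (an inequality between negative quantities reverses upon multiplication); and the dimension count must be run between a subspace $S$ in the $x$-variable and the $V^{1/2}$-preimage of a $U$-eigenspace in the $y$-variable, which is not itself a span of $V$-eigenvectors. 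Until that bookkeeping is carried out and shown to produce exactly the shifts $k+1-i$, $m+k-i$, $m+i-k$, $i+1-k$, the two substantive cases of the theorem remain unproven.
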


\begin{corollary}[see \cite{LuPearce}, Corollary 2.4]\label{LuPearce:cor}
Suppose $U$, $V$ are $(m\times m)$ Hermitian positive definite matrices.

Then for any $1\leq k \leq m$
$$
\lambda_1(U)\lambda_1(V) \geq \lambda_k(VU)\geq
\lambda_m(U)\lambda_{m}(V).
$$
\end{corollary}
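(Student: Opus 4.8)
The plan is to derive the corollary as a direct specialization of Theorem \ref{LuPearce}. Since both $U$ and $V$ are Hermitian positive definite, all of their eigenvalues are strictly positive; in particular $U$ has $\pi(U)=m$ positive eigenvalues and $\nu(U)=\xi(U)=0$. Consequently, for every index $1\leq k\leq m$ we are in the regime $1\leq k\leq\pi(U)$, so only part (1) of Theorem \ref{LuPearce} is ever invoked, giving
$$
\min_{1\leq i\leq k}\left\{\lambda_i(U)\lambda_{k+1-i}(V)\right\}\geq\lambda_k(VU)\geq\max_{k\leq i\leq m}\left\{\lambda_i(U)\lambda_{m+k-i}(V)\right\}.
$$
It then remains only to estimate the two outer quantities by $\lambda_1(U)\lambda_1(V)$ and $\lambda_m(U)\lambda_m(V)$, respectively.

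For the upper bound, I would use that a minimum over a finite index set does not exceed any single term; choosing $i=1$ yields
$$
\lambda_k(VU)\leq\min_{1\leq i\leq k}\left\{\lambda_i(U)\lambda_{k+1-i}(V)\right\}\leq\lambda_1(U)\lambda_k(V).
$$
Since the eigenvalues are ordered decreasingly we have $\lambda_k(V)\leq\lambda_1(V)$, and as $\lambda_1(U)>0$ this gives $\lambda_k(VU)\leq\lambda_1(U)\lambda_1(V)$. For the lower bound, symmetrically, a maximum over a finite index set is at least any single term; choosing $i=m$ (which lies in the range $k\leq i\leq m$ because $k\leq m$) yields
$$
\lambda_k(VU)\geq\max_{k\leq i\leq m}\left\{\lambda_i(U)\lambda_{m+k-i}(V)\right\}\geq\lambda_m(U)\lambda_k(V).
$$
Combining $\lambda_k(V)\geq\lambda_m(V)$ with $\lambda_m(U)>0$ (positive definiteness) produces $\lambda_k(VU)\geq\lambda_m(U)\lambda_m(V)$, which closes the chain of inequalities.

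I do not expect a genuine obstacle here: the entire content lies in recognizing that positive definiteness forces $1\leq k\leq\pi(U)$ for every $k$, after which the estimates are pure monotonicity of the (strictly positive) eigenvalue sequences. The only point demanding slight care is the bookkeeping of the index shifts $k+1-i$ and $m+k-i$, so that the corner choices $i=1$ and $i=m$ indeed reproduce the $\lambda_1(V)$- and $\lambda_m(V)$-type factors one needs.
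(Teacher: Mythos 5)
Your derivation is correct: positive definiteness of $U$ gives $\pi(U)=m$, so part (1) of Theorem \ref{LuPearce} applies for every $k$, and the corner choices $i=1$ and $i=m$ (both yielding the index $k$ for the $V$-factor) combined with the monotonicity $\lambda_m(V)\leq\lambda_k(V)\leq\lambda_1(V)$ and strict positivity of $\lambda_1(U)$, $\lambda_m(U)$ give exactly the claimed bounds. The paper itself states this corollary without proof, citing Lu and Pearce, and your argument is precisely the intended specialization of the preceding theorem, with the index bookkeeping handled correctly.
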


As a result of Lemma \ref{lem:2.1}, Lemma \ref{lem:2.2} and Theorem \ref{LuPearce}, we may prove the following lemma.
\begin{lemma}\label{cor:2.1} Let the conditions (H1) hold true.
\begin{enumerate}
\item Then $A_1D^{-1}$ and $-A_2D^{-1}$ are symmetric positive semi-definite matrices.
\item The following inequality hods true
$$
\lambda_{\max}(A_1D^{-1})\leq \dfrac{1}{\lambda_{\min}(D)}.
$$
\end{enumerate}
\end{lemma}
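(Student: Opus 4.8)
The plan is to obtain symmetry from Lemma~\ref{lem:2.1} and positive semi-definiteness from the eigenvalue data of Lemma~\ref{lem:2.2} combined with Theorem~\ref{LuPearce}. First note that $D$ is symmetric and positive \emph{definite}: it is diagonal with entries $2w(\boldsymbol x,\boldsymbol x_i)$, and since $D^{-1}$ exists while $w\ge 0$, every diagonal entry is strictly positive. Consequently $D^{-1}$ is symmetric positive definite as well, and $1/\lambda_{\min}(D)=\lambda_{\max}(D^{-1})$. By Lemma~\ref{lem:2.1} the matrices $A_1D^{-1}$ and $A_2D^{-1}$ are symmetric, hence so is $-A_2D^{-1}$; it remains only to control the signs of their eigenvalues.

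For part~(1) I would set $U=A_1D^{-1}$ (symmetric) and $V=D$ (symmetric positive definite), so that $VU=DA_1D^{-1}$ is \emph{similar} to $A_1$ and therefore, by Lemma~\ref{lem:2.2}, has only the eigenvalues $0$ and $1$; in particular its smallest eigenvalue satisfies $\lambda_m(VU)\ge 0$. Suppose, for contradiction, that $U$ had a negative eigenvalue, i.e.\ $\nu(U)\ge 1$. Then $m-\nu(U)<m$, so item~(3) of Theorem~\ref{LuPearce} applies with $k=m$ and gives
$$
\lambda_m(VU)\le \min_{1\le i\le m}\bigl\{\lambda_i(U)\lambda_i(V)\bigr\}\le \lambda_m(U)\lambda_m(V)<0,
$$
because $\lambda_m(U)<0$ and $\lambda_m(V)=\lambda_{\min}(D)>0$. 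This contradicts $\lambda_m(VU)\ge 0$, so $U=A_1D^{-1}$ has no negative eigenvalue and is positive semi-definite. The same argument applied to $U'=-A_2D^{-1}$ (symmetric) with $V=D$ works verbatim, since $VU'=-DA_2D^{-1}$ is similar to $-A_2$, whose eigenvalues are $0$ and $1$ by Lemma~\ref{lem:2.2}; hence $-A_2D^{-1}$ is positive semi-definite as well.

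Part~(2) then follows from part~(1) with no further use of singular values. Since $A_2=A_1-I$, we have $-A_2D^{-1}=D^{-1}-A_1D^{-1}$, and the positive semi-definiteness just proved says precisely that $A_1D^{-1}\preceq D^{-1}$ in the Loewner order. Monotonicity of the largest eigenvalue (Courant--Fischer) then yields
$$
\lambda_{\max}(A_1D^{-1})=\max_{\|\boldsymbol v\|=1}\boldsymbol v^{t}A_1D^{-1}\boldsymbol v\le \max_{\|\boldsymbol v\|=1}\boldsymbol v^{t}D^{-1}\boldsymbol v=\lambda_{\max}(D^{-1})=\frac1{\lambda_{\min}(D)}.
$$

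The step I expect to be the crux is the inertia transfer in part~(1): although $VU=DA_1D^{-1}$ has the convenient eigenvalues of $A_1$, it is \emph{not} symmetric, so one cannot read off positive semi-definiteness directly. What makes the argument go through is that $V=D$ is positive definite, which is exactly the hypothesis allowing Theorem~\ref{LuPearce} to pin the sign pattern of $\lambda(U)$ to that of $\lambda(VU)$ --- at bottom this is Sylvester's law of inertia for the congruence $D^{1/2}UD^{1/2}$, and one must be careful to invoke the correct branch (item~(3), $k=m$) of the theorem and to keep track of the decreasing ordering of the eigenvalues.
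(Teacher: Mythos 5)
Your proof is correct. For part (1) you follow essentially the paper's strategy --- assume a negative eigenvalue and contradict Lemma \ref{lem:2.2} via Theorem \ref{LuPearce} --- but with the roles of $U$ and $V$ interchanged: the paper takes $U=D$, $V=A_1D^{-1}$, so that $VU=A_1$ on the nose and the $k=m$ bound is applied directly, whereas you take $U=A_1D^{-1}$, $V=D$, pass to $VU=DA_1D^{-1}$ by similarity, and invoke item (3) of the theorem (whose branch condition $m-\nu(U)<k=m$ is exactly your contradiction hypothesis $\nu(U)\geq 1$); both routes land on the same quantity $\min_{1\leq i\leq m}\{\lambda_i(U)\lambda_i(V)\}<0$. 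For part (2) you genuinely diverge: the paper cites Corollary \ref{LuPearce:cor} together with $\lambda_k(A_1)\leq 1$ to squeeze out $\lambda_1(A_1D^{-1})\leq 1/\lambda_{\min}(D)$, while you observe that part (1) already gives $-A_2D^{-1}=D^{-1}-A_1D^{-1}\succeq 0$, i.e.\ $A_1D^{-1}\preceq D^{-1}$ in the Loewner order, and conclude by eigenvalue monotonicity (Courant--Fischer). Your version is more elementary and arguably more robust: Corollary \ref{LuPearce:cor} as stated requires both matrices to be positive \emph{definite}, whereas $A_1D^{-1}$ is only semi-definite when $l<m$, so the paper's citation needs the full Theorem \ref{LuPearce} to be literally justified; your Loewner-order argument sidesteps this entirely and uses no further singular-value machinery. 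The only hypothesis worth making explicit (as the paper's proof does in passing) is $\boldsymbol x\neq\boldsymbol x_i$ for all $i$, which is what guarantees $\lambda_{\min}(D)>0$ and the invertibility of $D$; you address this adequately by noting that the statement presupposes $D^{-1}$ exists.
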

\begin{proof} (1) We  apply Theorem \ref{LuPearce}, where
$$
U = D,\quad V = A_1D^{-1}.
$$

Obviously, $U$ is a symmetric positive definite matrix (in fact it is a diagonal matrix). Moreover $\pi(U)=m$, $\mu(U)=\xi(U)=0$, if $\boldsymbol x \not= \boldsymbol x_i$, $i=1,\dots,m$.

The matrix $V$ is symmetric, see Lemma \ref{lem:2.1}.

From the cited theorem, for any index $k$ ($k=1,\dots,m=\pi(U)$) we have
\begin{gather*}
\lambda_k(A_{1})=\lambda_k(A_{1}D^{-1}D)=\lambda_k(V U)
\leq\min\limits_{1\leq i\leq k}\left\{ \lambda_i(U)\lambda_{m+i-k}(V) \right\}.
\end{gather*}
In particular, if $k=m$:
\begin{gather}\label{eq:help:2.5}
\lambda_m(A_{1})
\leq\min\limits_{1\leq i\leq m}\left\{ \lambda_i(U)\lambda_i(V) \right\}.
\end{gather}

Let us suppose that there exists index $i_0$ ($i_0=1,\dots,m-1$) such that
\begin{gather}\label{eq:help:2.6}
\lambda_1(V)\geq\cdots\geq \lambda_{i_o}(V)\geq 0> \lambda_{i_o+1}(V)\geq\cdots\geq\lambda_m(V).
\end{gather}

It fowollws from \eqref{eq:help:2.6} and positive definiteness of $U$, that 
$$
\min\limits_{1\leq i\leq m}\left\{ \lambda_i(U)\lambda_i(V) \right\}\leq \lambda_{i_0+1}(U)\lambda_{i_0+1}(V)<0.
$$
Therefore (see \eqref{eq:help:2.5}) $\lambda_m(A_{1})<0$. This contradiction (see Lemma \ref{lem:2.2}) proves that the matrix $A_{1}D^{-1}$ is positive semi-definite.

If we set $U = D$, $V = -A_2D^{-1}$ then by analogical arguments, we see that the matrix $-A_2D^{-1}$ is positive semi-definite.
\bigskip

(2) From the first statement of Lemma \ref{cor:2.1}, $V=A_1D^{-1}$ is positive semi-definite. Therefore (see Corollary \ref{LuPearce:cor} and Lemma \ref{lem:2.2}):
$$
1\geq\lambda_k(A_{1})=\lambda_k(VU) \geq \max\left\{\lambda_m(U)\lambda_k(V),\lambda_m(V)\lambda_k(U)\right\}
$$
for all $k=1,\dots,m$. Moreover all numbers $\lambda_k(U)$, $\lambda_k(V)$ are non-negative and
$$
\lambda_{\max}(D)=\lambda_1(U)\geq\cdots\geq\lambda_m(U)=\lambda_{\min}(D),\quad
\lambda_1(V)\geq\cdots\geq\lambda_m(V).
$$
Therefore
$$
1\geq \max\left\{\lambda_m(U)\lambda_1(V),\lambda_m(V)\lambda_1(U)\right\},
$$
or
\begin{gather*}
\lambda_{\max}(A_1D^{-1})=\lambda_1(V)\leq \dfrac{1}{\lambda_m(U)}=\dfrac{1}{\lambda_{\min}(D)}.\qedhere
\end{gather*}
\end{proof}

In the following, we will need some results related to inequalities for singular values. So, we will list some necessary inequalities in the next lemma.

\begin{lemma}[see \cite{MerikoskiKumar}, \cite{Jabbari}]\label{lem:help:Jabbari}
Let $U$ be an $(d_1\times d_2)$-matrix, $V$ be an $(d_3\times d_4)$-matrix.

Then:
\begin{alignat}{2}
&\sigma_{\max}(UV)\leq\sigma_{\max}(U)\sigma_{\max}(V),&&\label{lem:help:Jabbari:eq1}\\
&\sigma_{\max}(U^{-1})=\dfrac{1}{\sigma_{\min}(U)},&&\quad \text{if $d_1=d_2$, $\det{U}\not=0$},\label{lem:help:Jabbari:eq2}\\
&\sigma_{\max}(V)\sigma_{\min}(U)\leq\sigma_{\max}(UV),&&\quad \text{if $d_1\geq d_2=d_3$},\label{lem:help:Jabbari:eq3}\\
&\sigma_{\max}(U)\sigma_{\min}(V)\leq\sigma_{\max}(UV),&&\quad \text{if $d_4\geq d_3=d_2$},\label{lem:help:Jabbari:eq4}
\end{alignat}

If $d_1=d_2$ and $U$ is Hermitian matrix, then $\|U\|=\sigma_{\max}(U)$, $\sigma_i(U)=|\lambda_i(U)|$, $i=1,\dots,d_1$.
\end{lemma}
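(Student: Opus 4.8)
The plan is to reduce all four inequalities to two elementary facts about the spectral norm and the extreme singular values, both of which are immediate from the singular-value decomposition $U=W\Sigma Z^t$: the variational identity $\sigma_{\max}(U)=\max_{\|x\|=1}\|Ux\|$, and the two-sided bound $\sigma_{\min}(U)\|x\|\leq\|Ux\|\leq\sigma_{\max}(U)\|x\|$, where the lower estimate holds for \emph{every} $x$ in the domain provided $U$ has at least as many rows as columns (so that the singular values account for the full action of $U$). With these in hand each inequality becomes a one-line estimate.

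First I would dispatch \eqref{lem:help:Jabbari:eq1}: for compatible $U,V$ and any unit vector $x$ one has $\|UVx\|\leq\sigma_{\max}(U)\|Vx\|\leq\sigma_{\max}(U)\sigma_{\max}(V)\|x\|$, and taking the supremum over $\|x\|=1$ yields submultiplicativity of the spectral norm. Then \eqref{lem:help:Jabbari:eq2} follows directly from the SVD: if $U=W\Sigma Z^t$ is nonsingular, then $U^{-1}=Z\Sigma^{-1}W^t$ displays the singular values of $U^{-1}$ as the reciprocals $1/\sigma_i(U)$, the largest of which is $1/\sigma_{\min}(U)$. For \eqref{lem:help:Jabbari:eq3}, where $d_1\geq d_2=d_3$ makes $U$ tall, the lower bound $\|Uy\|\geq\sigma_{\min}(U)\|y\|$ is valid for all $y$; choosing a unit vector $x^*$ with $\|Vx^*\|=\sigma_{\max}(V)$ and setting $y=Vx^*$ gives
$$
\sigma_{\max}(UV)\geq\|UVx^*\|\geq\sigma_{\min}(U)\|Vx^*\|=\sigma_{\min}(U)\sigma_{\max}(V).
$$

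Inequality \eqref{lem:help:Jabbari:eq4} I would obtain not by a fresh construction but by adjoint duality from \eqref{lem:help:Jabbari:eq3}. Since $\sigma_{\max}(UV)=\sigma_{\max}\bigl((UV)^t\bigr)=\sigma_{\max}(V^tU^t)$ and the hypothesis $d_4\geq d_3=d_2$ makes $V^t$ a tall $(d_4\times d_3)$-matrix while $U^t$ is $(d_3\times d_1)$, applying \eqref{lem:help:Jabbari:eq3} to the product $V^tU^t$ gives $\sigma_{\max}(U^t)\sigma_{\min}(V^t)\leq\sigma_{\max}(V^tU^t)$, and $\sigma_i(M^t)=\sigma_i(M)$ turns this into $\sigma_{\max}(U)\sigma_{\min}(V)\leq\sigma_{\max}(UV)$. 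Finally, for the closing assertion the spectral decomposition $U=Q\Lambda Q^t$ of a Hermitian $U$ gives $U^tU=Q\Lambda^2Q^t$, so $\sigma_i(U)=\sqrt{\lambda_i(U)^2}=|\lambda_i(U)|$ and $\sigma_{\max}(U)=\max_i|\lambda_i(U)|=\|U\|$.

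The one genuinely delicate point is the precise meaning of $\sigma_{\min}$ for rectangular matrices and the validity of $\|Uy\|\geq\sigma_{\min}(U)\|y\|$ for all $y$: this demands that the matrix be tall, since only then do the singular values exhaust the action of the map on its entire domain. The dimension hypotheses $d_1\geq d_2=d_3$ in \eqref{lem:help:Jabbari:eq3} and $d_4\geq d_3=d_2$ in \eqref{lem:help:Jabbari:eq4} are exactly what guarantee this for $U$ and for $V^t$ respectively, so the real care lies in the bookkeeping that matches each inequality to the correct orientation. The adjoint reduction is precisely what lets \eqref{lem:help:Jabbari:eq4} inherit the estimate from \eqref{lem:help:Jabbari:eq3} without having to build a minimum-norm preimage for the wide matrix $V$ by hand.
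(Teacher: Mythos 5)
The paper does not prove this lemma at all: it is stated as a collection of known facts imported verbatim from the cited references (Merikoski--Kumar and Jabbari), so there is no in-paper argument to compare yours against. Your self-contained derivation from the SVD is correct, and the two points that actually require care are handled properly. First, the lower bound $\|Uy\|\geq\sigma_{\min}(U)\|y\|$ for \emph{every} $y$ does hinge on $U$ being tall: for $d_1\geq d_2$ one has $\|Uy\|^2=y^tU^tUy\geq\lambda_{\min}(U^tU)\|y\|^2=\sigma_{\min}(U)^2\|y\|^2$, whereas a wide matrix has a nontrivial kernel and the estimate fails; this is exactly why \eqref{lem:help:Jabbari:eq3} carries the hypothesis $d_1\geq d_2=d_3$. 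Second, the reduction of \eqref{lem:help:Jabbari:eq4} to \eqref{lem:help:Jabbari:eq3} via $\sigma_{\max}(UV)=\sigma_{\max}(V^tU^t)$ checks out dimensionally: under $d_4\geq d_3=d_2$ the matrix $V^t$ is tall of size $d_4\times d_3$ and $U^t$ is $d_3\times d_1$, so \eqref{lem:help:Jabbari:eq3} applies to the pair $(V^t,U^t)$, and invariance of singular values under transposition converts the conclusion into the stated form. The remaining items (submultiplicativity via the variational characterization, $\sigma_i(U^{-1})=1/\sigma_{d_1+1-i}(U)$ from inverting the SVD, and $\sigma_i(U)=|\lambda_i(U)|$ from the spectral decomposition of a Hermitian matrix) are all standard and correctly argued. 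What your approach buys is that the lemma becomes self-contained rather than an appeal to external sources, at the cost of a page of routine verification the authors chose to omit.
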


\begin{lemma}\label{lem:2.4} Let the conditions (H1) hold true and let $\boldsymbol x\not=\boldsymbol x_i$, $i=1,\dots,m$.

Then:
\begin{align}
&\|A_1D^{-1}\| \leq\dfrac{1}{\lambda_{\min}(D)},\label{lem:2.4:eq1}\\
&\sigma_{\max}(A_1)\sigma_{\min}(D^{-1})\leq\sigma_{\max}(A_1D^{-1}),\label{lem:2.4:eq2}\\
&1\leq\|A_1\|\leq \sqrt{\dfrac{\sigma_{\max}(D)}{\sigma_{\min}(D)}}.\label{lem:2.4:eq3}
\end{align}
\end{lemma}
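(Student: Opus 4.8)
The plan is to dispatch the three inequalities separately; the first two follow quickly from the structural facts already in hand together with the singular-value toolbox, whereas the third needs one real idea. For \eqref{lem:2.4:eq1}, recall from Lemma \ref{cor:2.1} that $A_1D^{-1}$ is symmetric positive semi-definite. By the closing statement of Lemma \ref{lem:help:Jabbari} the spectral norm of a Hermitian matrix equals its largest singular value, and for a positive semi-definite matrix the singular values coincide with the eigenvalues; hence $\|A_1D^{-1}\| = \lambda_{\max}(A_1D^{-1})$, and part (2) of Lemma \ref{cor:2.1} bounds this by $1/\lambda_{\min}(D)$, which is \eqref{lem:2.4:eq1}. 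Inequality \eqref{lem:2.4:eq2} is then an immediate instance of \eqref{lem:help:Jabbari:eq4} with $U = A_1$ and $V = D^{-1}$: both are $(m\times m)$, so the hypothesis $d_4 \ge d_3 = d_2$ reads $m \ge m = m$ and holds, yielding $\sigma_{\max}(A_1)\sigma_{\min}(D^{-1}) \le \sigma_{\max}(A_1D^{-1})$.

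The substance is in \eqref{lem:2.4:eq3}. For the lower bound I would observe that $A_1$ is idempotent, $A_1^2 = A_1$, as a direct computation cancelling $(E^tD^{-1}E)^{-1}$ against $E^tD^{-1}E$ shows (equivalently, by Lemma \ref{lem:2.2} the value $1$ is an eigenvalue of $A_1$); since $A_1\neq 0$, the chain $\|A_1\| = \|A_1^2\| \le \|A_1\|^2$ forces $\|A_1\| \ge 1$. For the upper bound I would conjugate by $D^{1/2}$, which is legitimate as $D$ is a positive diagonal matrix. Setting $B = D^{-1/2}E$ one checks $E^tD^{-1}E = B^tB$ and
$$
D^{1/2}A_1 D^{-1/2} = D^{-1/2}E\left(E^tD^{-1}E\right)^{-1}E^tD^{-1/2} = B\left(B^tB\right)^{-1}B^t =: P,
$$
which is symmetric and idempotent, i.e. an orthogonal projection onto the column space of $B$ (of rank $l\ge 1$), so $\|P\| = 1$. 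Then $A_1 = D^{-1/2}PD^{1/2}$, and submultiplicativity \eqref{lem:help:Jabbari:eq1} gives $\sigma_{\max}(A_1) \le \sigma_{\max}(D^{-1/2})\,\sigma_{\max}(D^{1/2})$; using \eqref{lem:help:Jabbari:eq2} together with $\sigma_{\max}(D^{1/2}) = \sqrt{\sigma_{\max}(D)}$ and $\sigma_{\min}(D^{1/2}) = \sqrt{\sigma_{\min}(D)}$ delivers $\sigma_{\max}(A_1)\le \sqrt{\sigma_{\max}(D)/\sigma_{\min}(D)}$, which is the asserted bound.

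The main obstacle is precisely producing the square root. The naive route---bounding $\sigma_{\max}(A_1) = \sigma_{\max}\!\left((A_1D^{-1})D\right) \le \|A_1D^{-1}\|\,\sigma_{\max}(D)$ and inserting \eqref{lem:2.4:eq1}---only gives the weaker ratio $\sigma_{\max}(D)/\sigma_{\min}(D)$. The symmetric splitting $D = D^{1/2}\cdot D^{1/2}$, placing one factor on each side so that $A_1$ becomes \emph{similar} to a genuine orthogonal projection of norm exactly $1$, is what distributes the conditioning of $D$ evenly and halves the exponent. Recognizing that the conjugated matrix $P = B(B^tB)^{-1}B^t$ is an orthogonal projection is therefore the crux of the argument.
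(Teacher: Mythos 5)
Your argument is correct. For \eqref{lem:2.4:eq1} and \eqref{lem:2.4:eq2} you follow the paper exactly: the first is read off from Lemma \ref{cor:2.1} together with the closing statement of Lemma \ref{lem:help:Jabbari}, and the second is the same instance of \eqref{lem:help:Jabbari:eq4}. For \eqref{lem:2.4:eq3} you take a genuinely different route. The paper gets the lower bound from $E^t=E^tA_1$ and submultiplicativity, while you use idempotency $A_1^2=A_1$; these are equally quick. For the upper bound the paper first derives $\sigma_{\max}(A_1)\leq\sigma_{\max}(D)/\sigma_{\min}(D)$ from \eqref{lem:2.4:eq1}--\eqref{lem:2.4:eq2} (precisely your ``naive route'') and then passes to the square root by invoking the identity $\|A_1\|=\sqrt{\sigma_{\max}(A_1)}$. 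Under the standard convention --- which the paper itself adopts in Lemma \ref{lem:help:Jabbari}, where $\|U\|=\sigma_{\max}(U)$ and $\sigma_i(U)=|\lambda_i(U)|$ for Hermitian $U$ --- that identity does not hold ($\|A_1\|$ equals $\sigma_{\max}(A_1)$, not its square root), so the paper's own derivation of the square-root bound has a gap. Your conjugation $A_1=D^{-1/2}PD^{1/2}$, with $P=B\left(B^tB\right)^{-1}B^t$ an orthogonal projection of norm $1$, splits the conditioning of $D$ symmetrically and yields $\|A_1\|\leq\sqrt{\sigma_{\max}(D)/\sigma_{\min}(D)}$ directly and rigorously; it is the preferable argument and in effect repairs the proof of the right-hand inequality in \eqref{lem:2.4:eq3}, which is the bound actually used later in Theorem \ref{thm:3.3.5a}.
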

\begin{proof} The matrix $A_1D^{-1}$ is simmetric and positive semi-definite (see Lemma \ref{cor:2.1}(1)). Using the second statement of Lemma \ref{cor:2.1} and Lemma \ref{lem:help:Jabbari}, we receive
$$
\|A_1D^{-1}\|=\sigma_{\max}(A_1D^{-1})=\lambda_{\max}\left(A_1D^{-1}\right)\leq\dfrac{1}{\lambda_{\min}(D)}.
$$

The inequality \eqref{lem:2.4:eq2} follows from \eqref{lem:help:Jabbari:eq4} ($d_4=d_3=m$).

From \eqref{lem:2.4:eq2} and \eqref{lem:help:Jabbari:eq2}, we receive
$$
\sigma_{\max}(A_1)\leq\dfrac{\sigma_{\max}(A_1D^{-1})}{\sigma_{\min}(D^{-1})}
=\dfrac{\sigma_{\max}(D)}{\sigma_{\min}(D)}.
$$
Therefore the equality $\|A_1\|=\sqrt{\sigma_{\max}(A_1)}$ implies the right inequality in \eqref{lem:2.4:eq3}.

Using $E^t=E^tA_1$ and inequality \eqref{lem:help:Jabbari:eq1}, we receive
$$
\sigma_{\max}(E^t) \leq \sigma_{\max}(E^t)\sigma_{\max}(A_1),
$$
or $1\leq \sigma_{\max}(A_1)=\|A_1\|^2$, i.e. the left inequality in \eqref{lem:2.4:eq3}.

The lemma has been proved.
\end{proof}

\section{An Inequality for the Norm of Approximation Coefficients}

We will use the following hypotheses:
\begin{enumerate}
\item[H2.1.] The hypotheses (H1) hold true.
\item[H2.2.] $d=1$, $x_1<\cdots <x_m$.
\item[H2.3.] The map $\boldsymbol c$ is $C^1$-smooth in $[x_1,x_m]$.
\item[H2.4.] $w(|x-x_i|)=\exp(\alpha (x-x_i)^2)$, $i=1,\dots,m$.
\end{enumerate}

\begin{theorem}\label{thm:3.3.5a} Let the following conditions hold true:
\begin{enumerate}
\item Hypotheses (H2).
\item Let $x\in[x_1,x_m]$ be a fixed point.
\item The index $k_0\in\{1,\dots,m\}$ is choosen such taht $$|x-x_{k_0}|=\min\{|x-x_i|:i=1,\dots,m\}.$$
\end{enumerate}

Then, there exist constants $M_1,M_2>0$ such that
$$
\left\|\boldsymbol a(x)\right\| \leq \Big(\left\|\boldsymbol a(x_{k_0})\right\| + M_1 |x-x_{k_0}|\Big)\exp\left(M_2|x-x_{k_0}|\right).
$$
\end{theorem}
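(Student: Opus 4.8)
The plan is to reduce the estimate to a Gronwall-type differential inequality for the scalar function $\phi(x)=\|\boldsymbol a(x)\|$ and then to integrate it from the nearest node $x_{k_0}$. First I would record two preliminary facts. Since $D$ and $\boldsymbol c$ are $C^1$ on $[x_1,x_m]$ by (H2.4) and (H2.3), and $E^tD^{-1}E$ is nonsingular under (H1), the map $\boldsymbol a(x)=D^{-1}E(E^tD^{-1}E)^{-1}\boldsymbol c(x)=A_0(x)\boldsymbol c(x)$ is $C^1$. Moreover $\boldsymbol a(x)$ never vanishes: the constraint $E^t\boldsymbol a=\boldsymbol c$ together with (H1.1) (expressing $1\in\mathcal P_l$ in the basis) forces $\sum_{i=1}^m a_i(x)=1$, so $\boldsymbol a(x)\neq\boldsymbol 0$. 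Hence $\phi$ is differentiable with $|\phi'(x)|\leq\|\boldsymbol a'(x)\|$ by the Cauchy--Schwarz inequality.

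The key step is an explicit formula for $\boldsymbol a'$. Writing $G=E^tD^{-1}E$ and differentiating $\boldsymbol a=D^{-1}EG^{-1}\boldsymbol c$ (with $E$ constant in $x$), I would use $(D^{-1})'=-D^{-1}D'D^{-1}$ and $G'=-E^tD^{-1}D'D^{-1}E$. Substituting $D\boldsymbol a=EG^{-1}\boldsymbol c$ shows that the term from $(D^{-1})'$ equals $-D^{-1}D'\boldsymbol a$ while the term from $(G^{-1})'$ equals $A_0E^tD^{-1}D'\boldsymbol a=A_1D^{-1}D'\boldsymbol a$; these combine into $(A_1-I)D^{-1}D'\boldsymbol a$, so that
\begin{gather}
\boldsymbol a'(x)=A_2\,D^{-1}D'\,\boldsymbol a(x)+A_0\,\boldsymbol c'(x),\label{eq:aprime}
\end{gather}
where I used $A_0E^t=A_1$ and $A_2=A_1-I$. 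This identity is the heart of the argument, and deriving it correctly is the step where the matrix calculus must be done with care.

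Next I would bound the two factors in \eqref{eq:aprime} uniformly in $x\in[x_1,x_m]$. By (H2.4) one computes $D^{-1}D'=2\alpha\,\mathrm{diag}(x-x_i)$, hence $\|D^{-1}D'\|\leq 2\alpha(x_m-x_1)$. Since $\exp(\alpha(x-x_i)^2)$ lies in a fixed interval determined by $[x_1,x_m]$, the singular values of $D$ stay bounded away from $0$ and $\infty$; together with the estimate $\sigma_{\max}(A_1)\leq\sigma_{\max}(D)/\sigma_{\min}(D)$ from the proof of Lemma \ref{lem:2.4} this bounds $\sigma_{\max}(A_1)$, and hence $\|A_2\|$, by a constant independent of $x$. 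From \eqref{lem:help:Jabbari:eq4} I get $\sigma_{\max}(A_0)\leq\sigma_{\max}(A_1)/\sigma_{\min}(E^t)$, again a constant since $\sigma_{\min}(E^t)>0$ by (H1.3), and $\|\boldsymbol c'\|$ is bounded on the compact interval by (H2.3). Setting $M_1=\sup_{x}\|A_0\boldsymbol c'(x)\|$ and $M_2=\sup_{x}\|A_2D^{-1}D'\|$, both finite, \eqref{eq:aprime} and $|\phi'|\leq\|\boldsymbol a'\|$ give the pointwise inequality $\phi'(x)\leq M_2\phi(x)+M_1$ (and likewise $\phi'(x)\geq-(M_2\phi(x)+M_1)$).

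Finally I would integrate. For $x\geq x_{k_0}$, multiplying $\phi'\leq M_2\phi+M_1$ by $e^{-M_2x}$ and integrating from $x_{k_0}$ gives
\begin{gather*}
\phi(x)\leq\phi(x_{k_0})e^{M_2(x-x_{k_0})}+M_1\int_{x_{k_0}}^{x}e^{M_2(x-s)}\,ds;
\end{gather*}
bounding $e^{M_2(x-s)}\leq e^{M_2(x-x_{k_0})}$ on $[x_{k_0},x]$ turns the integral into $M_1(x-x_{k_0})e^{M_2(x-x_{k_0})}$ and yields exactly the claimed estimate. For $x\leq x_{k_0}$ the same computation applied to $s\mapsto\|\boldsymbol a(x_{k_0}-s)\|$, using $\phi'\geq-(M_2\phi+M_1)$, reproduces the bound with $|x-x_{k_0}|$. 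I expect the derivation of \eqref{eq:aprime} to be the main obstacle; the uniform norm bounds are then routine consequences of Section \ref{sec:second} and the compactness of $[x_1,x_m]$.
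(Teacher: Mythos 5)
Your proof is correct and follows essentially the same route as the paper: you derive the same differential equation (your $D^{-1}D'$ is exactly the paper's diagonal matrix $H$, since $D'=HD$ and diagonal matrices commute), obtain the same uniform bounds on $\|A_2H\|$ and $\|A_0\boldsymbol c'\|$ via Lemma \ref{lem:2.4} and \eqref{lem:help:Jabbari:eq4}, and your explicit Gronwall integration is precisely the estimate from Hartman's Lemma 4.1 that the paper invokes. The only cosmetic difference is your additional (correct) observation that $\sum_i a_i=1$ guarantees $\boldsymbol a\neq\boldsymbol 0$, so that $\|\boldsymbol a(x)\|$ is differentiable.
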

\begin{proof} Part 1. Let
$$
H = \begin{pmatrix}
2\alpha (x-x_1) & 0 & \cdots & 0\\
0 & 2\alpha (x-x_2) & \cdots & 0\\
\vdots   & \vdots   &        & \vdots\\
0 & 0 & \cdots & 2\alpha (x-x_m)\\
\end{pmatrix},
$$
then
$$
\frac{d D}{dx}  =  HD,\quad
\frac{d D^{-1}}{dx} = -HD^{-1}.
$$

We have (obviously $D=D(x)$, $H=H(x)$, and $\boldsymbol c=\boldsymbol c(x)$)
\begin{align*}
\dfrac{d \boldsymbol a(x)}{dx}  = & \dfrac{d}{dx} \left(D^{-1}E\left(E^tD^{-1}E\right)^{-1} \boldsymbol c\right)\\
= & \left(\dfrac{d}{dx} D^{-1}\right)E\left(E^tD^{-1}E\right)^{-1}\boldsymbol c + D^{-1}E \left(\dfrac{d}{dx} \left(E^tD^{-1}E\right)^{-1}\right)\boldsymbol c\\
& + D^{-1}E\left(E^tD^{-1}E\right)^{-1} \dfrac{d}{dx}\boldsymbol c\\
= & - HD^{-1}E \left(E^tD^{-1}E\right)^{-1}\boldsymbol c\\
  & + D^{-1}E \left(-\left(E^tD^{-1}E\right)^{-1} \left(\dfrac{d}{d\alpha} E^tD^{-1}E\right)\left(E^tD^{-1}E\right)^{-1} \right)\boldsymbol c\\
  & + D^{-1}E\left(E^tD^{-1}E\right)^{-1} \dfrac{d}{dx}\boldsymbol c\\
= & - H \boldsymbol a\\
  & + D^{-1}E \left(E^tD^{-1}E\right)^{-1} \left(E^t H D^{-1}E\right)\left(E^tD^{-1}E\right)^{-1} \boldsymbol c\\
  & + D^{-1}E\left(E^tD^{-1}E\right)^{-1} \dfrac{d}{dx}\boldsymbol c\\
= & \left(D^{-1}E \left(E^tD^{-1}E\right)^{-1} E^t-I\right) H\boldsymbol a\\
& + D^{-1}E\left(E^tD^{-1}E\right)^{-1} \dfrac{d}{dx}\boldsymbol c\\
= & A_2 H \boldsymbol a + A_0 \dfrac{d}{dx}\boldsymbol c.
\end{align*}

Therefore, the function $\boldsymbol a(x)$ satisfies the differential equation
\begin{gather}\label{thm:3.3.5a:eq10}
\dfrac{d \boldsymbol a(x)}{dx}  = A_2 H \boldsymbol a + A_0 \dfrac{d}{dx}\boldsymbol c.
\end{gather}

Part 2. Obviously
$$
\|A_2H\|=\left\|(A_1-I)H\right\|\leq(\|A_1\|+1)\|H\|.
$$
It follows from \eqref{lem:2.4:eq3} that
$$
\|A_1\|\leq \sqrt{\dfrac{\sigma_{\max}(D)}{\sigma_{\min}(D)}}.
$$
Here $\sigma_{\max}(D)\leq 2\exp(\alpha r^2)$, $r=x_m-x_1$, and $\sigma_{\min}(D)\geq 2$. Hence
$$
\|A_1\|\leq \sqrt{\exp(\alpha r^2)}.
$$

For the norm of diagonal matrix $H$, we receive
$$
\|H\|\leq 2 \alpha r.
$$

Therefore $\|A_2H\|\leq M_2$, where
$$
M_2= 2 \alpha r\left(1+\sqrt{\exp(\alpha r^2)}\right).
$$

We will use Lemma \ref{lem:help:Jabbari} to obtain the norm of $A_0$.

Obviously $A_0E^t=A_1$. Therefore by \eqref{lem:help:Jabbari:eq4} ($m=d_4\geq d_3=l$) we have
$$
\sigma_{\max}(A_0)\sigma_{\min}(E^t)\leq\sigma_{\max}(A_1),
$$
i.e.
$$
\left\|A_0\right\|\leq\dfrac{1}{\sigma_{\min}(E^t)}\sqrt{\dfrac{\sigma_{\max}(D)}{\sigma_{\min}(D)}}.
$$

Therefore, if we set $M_{11}=\dfrac{M_2}{\sigma_{\min}(E^t)}$, then $\|A_0\|\leq M_1$.

Let the constant $M_{12}$ is choosen such that
$$
\left\|\dfrac{d}{dx}\boldsymbol c(x)\right\|\leq M_{12},\quad x\in[x_1,x_m]
$$
and let $M_1=M_{11}M_{12}$.

\bigskip 

Part 3. On the end, we have only to apply Lemma 4.1 form \cite{Hartman} to the equation \eqref{thm:3.3.5a:eq10}:
\begin{align*}
\|\boldsymbol a(x)\| \leq& \left(\|\boldsymbol a(x_{k_0})\|+\left|\ \int\limits_{x_{k_0}}^{x}\left\|A_0 \dfrac{d}{dx}\boldsymbol c\right\|dx\right|\right)\exp\left|\ \int\limits_{x_{k_0}}^{x}\|A_2H\|dx\right|\\
\leq & \left(\|\boldsymbol a(x_{k_0})\|+M_1|x-x_{k_0}|\right)\exp\left(M_2|x-x_{k_0}|\right).\qedhere
\end{align*}
\end{proof}

\begin{remark}
Let the hypotheses (H2) hold true and let moreover
$$
p_1(x)=1,\ p_2(x)=x,\ \dots,\ p_l(x)=x^{l-1},\quad l\geq 1.
$$

In such a case, we may replace the differentiation of vector-fuction
$$
\boldsymbol c(x)= \begin{pmatrix}
p_1(x)\\
p_2(x)\\
\vdots\\
p_l(x)
\end{pmatrix}=
\begin{pmatrix}
1\\
x\\
\vdots\\
x^{l-1}
\end{pmatrix}
$$
by left-multiplication:
\begin{align*}
\dfrac{d\boldsymbol c(x)}{dx}&=
\begin{pmatrix}
0\\
1\\
2x\\
3x^2\\
\vdots\\
(l-2)x^{l-3}\\
(l-1) x^{l-2}
\end{pmatrix}=
\begin{pmatrix}
0 & 0 & 0 & \dots & 0&0&0\\
1 & 0 & 0 & \dots & 0&0&0\\
0 & 2 & 0 & \dots & 0&0&0\\
0 & 0 & 3 & \dots & 0&0&0\\
\vdots & \vdots & \vdots & & \vdots&\vdots&\vdots\\
0 & 0 & 0 & \dots &l-2& 0&0\\
0 & 0 & 0 & \dots &0& l-1&0\\
\end{pmatrix}
\begin{pmatrix}
1\\
x\\
x^2\\
\vdots\\
x^{l-2}\\
x^{l-1}
\end{pmatrix}\\&=\bar\partial \boldsymbol c(x).
\end{align*}

The singular values of the matrix $\bar\partial$ are: $0,1,\dots,l-1$. Therefore $\|\bar\partial\|=\sqrt{l-1}$.

That is why, we may chose
$$
M_{22}=\sqrt{(l-1)}\max\limits_{1\leq i \leq l}\left\{\max\limits_{x_1<x<x_m}\left|p_i(x)\right|\right\}.
$$

Additionally, if we supose $|x_1|\leq |x_m|$, then
$$
\max\limits_{x_1<x<x_m}\left|p_i(x)\right|=|p_i(x_m)|,\quad i=1,\dots,l.
$$

Therefore, in such a case:
$$
M_{22}=\sqrt{(l-1)}\max\limits_{1\leq i \leq l}\left\{\left|p_i(x_m)\right|\right\}.
$$

If we suppose $-1 \leq x_1 \leq x\leq x_m \leq 1$, then obviously, we may set
$$
M_{22} = \sqrt{l-1}.
$$
\end{remark}


\begin{thebibliography}{99}

\bibitem{AlexaBehrCohen-Or} Marc Alexa, Johannes Behr, Daniel Cohen-Or, Shachar Fleishman, David Levin, Claudio T. Silva, Point-Set Surfaces,  http://www.math.tau.ac.il/~levin/
\bibitem{BackusGilbert1968} G.E. Backus, F. Gilbert, The resolving power of gross Earth data, {\it Geophysical Journal of the Royal Astronomical Society}, {\bf 16} (1968), 169-205.
\bibitem{BackusGilbert1970} G.E. Backus, F. Gilbert, Uniqueness in the inversion of inaccurate gross Earth data, {\it Philosophical Transactions of the Royal Society of London A}, {\bf 266} (1970), 123-192.
\bibitem{Bjorck} {\AA}ke Bj{\"o}rck, The calculation of linear least squares problems, {\it Acta Numerica}, {\bf 13} (2004), 1-53, doi: 10.1017/S0962492904000169.
\bibitem{Fasshauer} G. Fasshauer, {\it Multivariate Meshfree Approximation},\\ http://www.math.iit.edu/~fass/603\_ch7.pdf
\bibitem{Handbook_of_Linear_Algebra} {\it Handbook of Linear Algebra}, Ed. Leslie Hogben, CRC Press (2006), 1400 pp.
\bibitem{Hartman} Philip Hartman, {\it Ordinary Differential Equations}, Second Edition, Classics in Applied Mathematics, {\bf 38}, SIAM (2002).
\bibitem{Jabbari} Faryar Jabbari,  {\it Linear System Theory II}, Chapter 3: Eigenvalue, singular values, pseudo-inverse, The Henry Samueli School of Engineering, University of California, Irvine (2015), http://gram.eng.uci.edu/~fjabbari/me270b/me270b.html.
\bibitem{LancasterSalkauskas1981} P. Lancaster, K. Salkauskas, Surfaces generated by moving least squares methods, {\it Mathematics of Computation}, {\bf 37} (1981), 141-158.
\bibitem{Levin1} D. Levin, The approximation power of mooving least-squares,\\ http://www.math.tau.ac.il/~levin/
\bibitem{Levin2} D. Levin, Mesh-independent surface interpolation,\\ http://www.math.tau.ac.il/~levin/
\bibitem{Levin3} D. Levin, Stable integration rules with scattered integration points, {\it Journal of Computational and Applied Mathematics}, {\bf 112} (1999), 181-187,\\ http://www.math.tau.ac.il/~levin/
\bibitem{LuPearce} L.-Z. Lu, C.E.M. Pearce, Some new bounds for singular values and eigenvalues of matrix products, {\it Annals of Operations Research}, Kluwer Academic Publishers, {\bf 98} (2000), 141-148.
\bibitem{McLain19742} D.H. McLain, Two Dimensional Interpolation from Random Data, {\it The Computer Journal}, {\bf 19}< No. 2 (1976), 178-181, doi: 10.1093/comjnl/19.2.178.
\bibitem{PetersenPedersen} Kaare Brandt Petersen, Michael Syskind Pedersen, {\it The Matrix Cookbook}, Version: February 16, 2006,\\ http://www.mit.edu/~wingated/stuff\_i\_use/matrix\_cookbook.pdf
\bibitem{Rump} Siegfried M. Rump, Verified bounds for singular values, in particular for the spectral norm of a matrix and its inverse, {\it BIT}, {\bf 51}, No. 2 (2011).
\bibitem{Shepard1} Donald Shepard, A two-dimensional interpolation function for irregularly-spaced data, In: {\it Proceedings of the 1968 ACM National Conference} (1968), 517-524, doi:10.1145/800186.810616.
\bibitem{Sonar} Thomas Sonar, Difference operators from interpolating moving least squares and their deviation from optimality, {\it ESAIM: Mathematical Modelling and Numerical Analysis}, {\bf 39}, No 5 (2005), 883–908, doi: 10.1051/m2an:2005039.

\bibitem{MerikoskiKumar} Jorma K. Merikoski, Ravinder Kumar, Inequalities for spreads of matrix sums and products, {\it Applied Mathematics E-Notes}, {\bf 4} (2004), 150-159.

\end{thebibliography}
\end{document}